\documentclass[12pt]{amsart}
\usepackage{amsfonts, amsthm, amssymb, amsmath, stmaryrd}
\usepackage{mathrsfs,array}
\usepackage{eucal,color,times,enumerate,accents}
\usepackage[all]{xy}
\usepackage{url}
\usepackage{enumitem}
\usepackage[pdftex,bookmarksnumbered,bookmarksopen]{hyperref}
\hypersetup{
  colorlinks,
  citecolor=black,
  linkcolor=black,
  urlcolor=black}
  
  \setcounter{tocdepth}{1}
  
  \newcommand{\MT}{\mathbf{MT}}

\usepackage{tikz-cd}
\usepackage{leftidx}
\usetikzlibrary{positioning}
\usepackage{dsfont}
\usepackage{tikz}
\usetikzlibrary{matrix,arrows,decorations.pathmorphing,positioning}

\newcommand{\HL}{\textnormal{HL}}
\newcommand{\an}{\textnormal{an}}

\usepackage[utf8]{inputenc}
\usepackage[T1]{fontenc}

  \newcommand{\Addresses}{{
  \bigskip
  \footnotesize

\textsc{I.H.E.S., Universit\'e Paris-Saclay, CNRS, Laboratoire Alexandre
  Grothendieck. 35 Route de Chartres, 91440 Bures-sur-Yvette
  (France)}\par\nopagebreak
  \textit{E-mail address}, G.~Baldi: \texttt{baldi@ihes.fr}, \texttt{baldi@imj-prg.fr}  

  \medskip

  \textsc{Humboldt Universit\"{a}t zu Berlin (Germany)}
  \par\nopagebreak
  \textit{E-mail address}, B.~Klingler: \texttt{bruno.klingler@hu-berlin.de}
  
  \medskip
  
\textsc{I.H.E.S., Universit\'e Paris-Saclay, CNRS, Laboratoire Alexandre Grothendieck. 35 Route de Chartres, 91440 Bures-sur-Yvette (France)}\par\nopagebreak
  \textit{E-mail address}, E.~Ullmo: \texttt{ullmo@ihes.fr}
}}

\usepackage[nameinlink]{cleveref}

\usepackage{bbm}
\newcommand{\VV}{\mathbb{V}}

\newcommand{\ad}{\textnormal{ad}}

\usepackage{leftidx}

\usepackage[bottom=3cm, top=3cm, left=3cm, right=3cm]{geometry}

\input xy
\xyoption{all}



\newcommand{\PP}{\mathbb{P}}
\usepackage{tikz-cd}
\theoremstyle{plain}
\newtheorem{thm}{Theorem}[]

\newtheorem{conj}{Conjecture}[]

\newcommand{\G}{{\mathbf G}}

\newcommand{\atyp}{\textnormal{atyp}}

\newcommand{\typ}{\textnormal{typ}}
\newcommand{\pos}{\textnormal{pos}}

\newtheorem{prop}[thm]{Proposition}
\newtheorem{cor}[thm]{Corollary}
\theoremstyle{definition}
\newtheorem{defi}[thm]{Definition}

\newtheorem{rmk}[thm]{Remark}
\newtheorem{question}[thm]{Question}
\theoremstyle{remark}

\numberwithin{equation}{section}


\newcommand{\CC}{\mathbb{C}}

\DeclareMathOperator{\NL}{NL}

\DeclareMathOperator{\codim}{codim}





\newcommand{\ev}{\operatorname{ev}}

\newcommand{\SO}{\operatorname{SO}}

\newcommand{\Z}{\mathbb{Z}}
\newcommand{\Q}{\mathbb{Q}}

\newcommand{\R}{\mathbb{R}}

\newcommand{\Oo}{\mathcal{O}}

\newcommand{\C}{\mathbb{C}}



\newcommand{\prim}{\textnormal{prim}}



\makeatletter
\def\@settitle{\begin{center}%
  \baselineskip14\p@\relax
  \bfseries
  \uppercasenonmath\@title
  \@title
  \ifx\@subtitle\@empty\else
     \\[1ex]\uppercasenonmath\@subtitle
     \footnotesize\mdseries\@subtitle
  \fi
  \end{center}%
}
\def\subtitle#1{\gdef\@subtitle{#1}}
\def\@subtitle{}
\makeatother

\usepackage{microtype}
\usepackage{fullpage}

\begin{document}

\newcommand{\adjunction}[4]{\xymatrix@1{#1{\ } \ar@<-0.3ex>[r]_{ {\scriptstyle #2}} & {\ } #3 \ar@<-0.3ex>[l]_{ {\scriptstyle #4}}}}

\title{Non-density of the exceptional components of the Noether-Lefschetz locus}
\date{\today}

\author{Gregorio Baldi, Bruno Klingler, and Emmanuel Ullmo}

\begin{abstract}
We study when the Picard group of smooth surfaces of degree
$d\geq 5$ in $\mathbb{P}^3$ acquires extra classes. In particular we
show that the so called \emph{exceptional components} of the
Noether-Lefschetz locus are not Zariski dense. This answers a 1991
question of C. Voisin. We also obtain similar results for the
Noether-Lefschetz locus for suitable $(Y,L)$, where $Y$ is a smooth
projective threefold and $L$ a very ample line bundle. Both results
are applications of the Zilber-Pink viewpoint recently developed by
the authors for arbitrary (polarized, integral) variations of Hodge
structures. 
\end{abstract}

\maketitle
\tableofcontents

\section{Introduction}
We work over $\C$ and fix an integer $d \geq 4$. Let
$U_d=\mathbb{P}H^0(\PP^3, \Oo(d))-\Delta$ be the scheme parametrizing
smooth surfaces $X$ of degree $d$ in $\PP^3$. Consider the so called
\emph{Noether-Lefschetz locus}: 
\begin{displaymath}
\NL_d:=\{[X]\in U_d : \operatorname{Pic}(\PP^3)\to
\operatorname{Pic}(X) \text{  is not an isomorphism} \}. 
\end{displaymath}
Each $X$ outside $\NL_d$ has the following pleasant and useful
property: every curve on $X$ is the complete intersection of $X$ with
another surface in $\mathbb{P}^3$.

The locus $\NL_d$ has been the subject of several investigations. In
1880 Noether stated that $\NL_d$ is a countable union of strict
irreducible algebraic subsets of $U_d$. This was 
proved in the 1920s by Lefschetz, and the topic flourished in the 1980s, in
large part due to the works of Griffiths and his
school. We refer to the notes \cite{zbMATH06342071} for a
discussion.

\begin{rmk}
  Each irreducible component of $\NL_d$ has a natural schematic
  structure, which is often non-reduced. In what follows we will always
  consider these components with their reduced structure.
  \end{rmk}

In this paper we study the relationship between the following theorem
and the recent general results on the distribution of the Hodge
locus that we obtained in \cite{2021arXiv210708838B}:
\begin{thm}[Explicit Noether-Lefschetz theorem (Green-Voisin)]\label{explicitNL} 
  Each irreducible component $Y$ of $\NL_d$ is
  such that 
\begin{displaymath}
d-3 \leq \codim_{U_d}Y \leq h^{2,0}= \ {{d-1}\choose{3}}.
\end{displaymath}
\end{thm}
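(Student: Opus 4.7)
The plan is to identify $\NL_d$ with the Hodge locus of the primitive $H^2$-variation of Hodge structure on $U_d$, and then translate the codimension bounds into an explicit computation in the Jacobian ring via Griffiths' infinitesimal period theorem.

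Let $\pi\colon\mathcal{X}\to U_d$ be the universal family and $\VV=R^2\pi_*\QQ_{\prim}$ the associated polarized $\ZZ$-VHS. At a general point $[X_0]\in Y$, Lefschetz's $(1,1)$-theorem produces a primitive integral class $\lambda\in H^{1,1}(X_0)\cap H^2_{\prim}(X_0,\ZZ)$ whose Hodge locus coincides, locally, with $Y$. Flat transport of $\lambda$ yields $h^{2,0}=\binom{d-1}{3}$ holomorphic functions on $U_d$ near $[X_0]$ (the components of the projection of the transport onto $H^{0,2}$) whose joint vanishing locus is $Y$; being cut out by $h^{2,0}$ equations, this gives the upper bound $\codim_{U_d}Y\leq\binom{d-1}{3}$. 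Differentiating these equations assembles into the map
\[
\delta_\lambda\colon T_{[X_0]}U_d\longrightarrow H^{0,2}(X_0),\qquad \xi\longmapsto\bar\nabla_\xi\lambda,
\]
with $T_{[X_0]}Y\subseteq\ker\delta_\lambda$; hence $\codim_{U_d}Y\geq\mathrm{rk}(\delta_\lambda)$, and it suffices to show $\mathrm{rk}(\delta_\lambda)\geq d-3$ for every nonzero primitive $(1,1)$-class $\lambda$.

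To make this concrete I would invoke Griffiths' Jacobian ring description. Write $f$ for an equation of $X_0$, $J_f\subset S:=\CC[x_0,\dots,x_3]$ for its Jacobian ideal, and $R:=S/J_f$ for the Jacobian ring. Griffiths' residue theorem provides a surjection $T_{[X_0]}U_d\twoheadrightarrow R_d$, isomorphisms $H^{p,q}_{\prim}(X_0)\cong R_{(3-p)d-4}$ for $p+q=2$, and identifies $\delta_\lambda$ with the multiplication map
\[
\mu_\lambda\colon R_d\longrightarrow R_{3d-4},\qquad g\longmapsto g\cdot\lambda,\qquad \lambda\in R_{2d-4}.
\]
The target has dimension $\binom{d-1}{3}=h^{2,0}$, re-confirming the upper bound.

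The main obstacle is the lower bound, i.e., Green's theorem that $\mathrm{rk}(\mu_\lambda)\geq d-3$ for every $0\neq\lambda\in R_{2d-4}$. The strategy is to exploit that $R$ is Artinian Gorenstein (Macaulay) with socle in degree $\sigma=4(d-2)$, yielding perfect pairings $R_a\times R_{\sigma-a}\to\CC$ compatible with the multiplication in $R$. Argue by contradiction: if $\dim\ker\mu_\lambda$ were too large in $R_d$, Donagi's symmetrizer lemma, applied to this kernel, would force $\lambda$ to factor through multiplication by a linear form in $R_1$; iterating this reduction shrinks $\lambda$ into progressively smaller graded subspaces, eventually contradicting $\lambda\neq 0$. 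Combining this Koszul-style argument with the Hodge-theoretic setup and Griffiths' Jacobian ring identification yields Theorem~\ref{explicitNL}.
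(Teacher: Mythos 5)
The paper does not actually prove \Cref{explicitNL}: it is stated as a known theorem, with the upper bound dispatched in one sentence (algebraicity $\Leftrightarrow$ type $(1,1)$, which is $h^{2,0}$ conditions) and the lower bound delegated to the cited references of Green, Voisin, and others (``fairly delicate algebraic considerations''). Your sketch of the upper bound is essentially the same observation, phrased via flat transport of $\lambda$ and its projection to $H^{0,2}$. Your Jacobian-ring reformulation is also correct: the identifications $H^{p,q}_{\prim}(X_0)\cong R_{(3-p)d-4}$, $\dim R_{d-4}=\dim R_{3d-4}=\binom{d-1}{3}$, socle degree $4(d-2)$, and the translation of $\overline{\nabla}_\xi\lambda$ into multiplication $\mu_\lambda\colon R_d\to R_{3d-4}$ all check out. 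Where the argument falls short is exactly where the theorem is hard: the assertion that $\mathrm{rk}(\mu_\lambda)\geq d-3$ for every nonzero $\lambda\in R_{2d-4}$ (Green's theorem) is not proved but only given as a contradiction-strategy (``Donagi's symmetrizer lemma forces $\lambda$ to factor through a linear form; iterating eventually contradicts $\lambda\neq 0$''), which is too vague to constitute a proof — it is precisely the ``delicate'' part the paper alludes to and cites. So your proposal correctly reconstructs the standard framework and the easy upper bound, but the lower bound, the actual content of \Cref{explicitNL}, remains an appeal to the literature in both your write-up and the paper.
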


The upper bound on codimension is elementary: a class $\int_{\lambda} \in
H^{2}(X,\Q)$ is algebraic if and only if it has type (1,1), which is equivalent
to $\int_{\lambda } \omega =0$ for each $\omega \in
H^0(X,\Omega^2_X)$). It is also easy to show that the family of smooth
degree~$d$ surfaces containing a line forms a component of $\NL_d$ of
codimension $d-3$, see
e.g. \cite[Example 1.1]{zbMATH06342071}. On the other hand, showing that
this provides a lower bound is more subtle and depends on
fairly delicate algebraic considerations. See
\cite{zbMATH03891508, zbMATH04103223, zbMATH03997985}. 

\medskip
In view of \Cref{explicitNL} the following definition is natural, it
appeared explicitly\footnote{In the \cite{zbMATH04097545} the terminology \emph{special} is used in place of \emph{exceptional}. In order to avoid a possible source of confusion, since the word `special' is at odds with its use in \cite{2021arXiv210708838B}, we decided rename it from the start.} in \cite[(page 668)]{zbMATH04097545}; see also the previous
works \cite{zbMATH03842038, zbMATH03879066}: 
\begin{defi}\label{defityp}
A component $Y $ of $\NL_d$ is said to be \emph{general} if it has
codimension $h^{2,0}$, and \emph{exceptional} otherwise. 
\end{defi}

It is known that the union of the general components of $\NL_d$ is Zariski-dense in $U_d$
\cite{zbMATH04097545}, and that $\NL_d$ is even dense in $U_d$ for the
analytic topology (an argument of Green, written for instance in \cite[Prop. 5.20]{zbMATH01927232}). See also
\cite{zbMATH00060091} for the construction of some explicit exceptional
components. We refer also to the recent works
\cite{2022arXiv221110592E, 2023arXiv230316179K}  (inspired by the
Zilber-Pink paradigm proposed in \cite{2021arXiv210708838B}). In particular such
results show, abstractly, the existence of general
components in $\NL_d$ and the density of their union. See also \Cref{remmm} and (1) in
\Cref{thmfinal}, for a more precise statement and detailed discussion
on this important point.

\subsection{Harris conjecture and a question of Voisin}
From now on, we assume $d \geq 5$ (quadrics and  cubic surfaces
are both rational and will have no non-trivial periods; while for $d=4$, there are
no exceptional components). Harris conjectured in
\cite[(page 301)]{zbMATH04103221} that 
$\NL_d$ has only finitely many exceptional components. This conjecture was first
disproved by Voisin in \cite{zbMATH00027662}, for $d$ big
enough and divisible by 4. We refer also to \cite[Example
3.10]{zbMATH06342071} for a brief overview of the 
ingenious construction of Voisin, which ultimately comes from
$\NL_4$. 

\begin{rmk}
If one takes into account the natural more complicated schematic
structure on each component of $\NL_d$, Dan showed recently in
\cite{zbMATH07414659} that for $d\geq 
6$ there exists infinitely many such components with the same underlying
reduced component, thus disproving the stronger Harris conjecture in this
setting. More precisely, he shows that the space parametrizing smooth, degree $d$ 
surfaces containing 2 coplanar lines can be equipped with
infinitely many (distinct) scheme structures naturally arising as the
Hodge loci associated to different combinations of the two coplanar
lines (see Theorem 2.4 in \emph{op. cit.}).
\end{rmk}

After disproving Harris conjecture, Voisin asked the following
\cite[0.5]{zbMATH00027662} (see also \cite[Question
3.11]{zbMATH06342071}):  
\begin{question}(Voisin)\label{questionv}
Is the union of exceptional components of the Noether-Lefschetz locus Zariski dense in $U_{d}$?
\end{question}

The main result of this note provides a negative answer to
\Cref{questionv} (as expected by Voisin):

\begin{thm}\label{mainthm0}
For $d \geq 5$, the union of the exceptional components of $\NL_d$ is
not Zariski dense in $U_d$.
\end{thm}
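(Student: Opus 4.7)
\medskip

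\noindent\textbf{Proof proposal.} The plan is to realize $\NL_d$ as the Hodge locus of the polarized integral VHS $\VV$ on $U_d$ of weight $2$ given by the primitive part of $R^2\pi_*\Z$ for the universal family of smooth degree-$d$ surfaces in $\PP^3$, to identify exceptional components of $\NL_d$ with atypical components of $\HL(U_d,\VV)$ in the sense of \cite{2021arXiv210708838B}, and then to invoke the general non-density statement for the atypical Hodge locus proved in \emph{op.\ cit.}

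First I would recall the main theorem of \cite{2021arXiv210708838B}: for any polarized $\Z$-VHS on a smooth quasi-projective base $S$, the atypical part $\HL(S,\VV)_{\atyp}$ of the Hodge locus is contained in a proper Zariski closed subvariety of $S$. Here the expected codimension of a positive-dimensional component of the Hodge locus is computed from how the corresponding Mumford-Tate subdomain sits inside the ambient period domain; a component is atypical precisely when its codimension in $S$ is strictly smaller than this expected codimension.

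Next I would verify that in our setup the expected codimension of the ``smallest'' Hodge locus---the locus where an extra rational $(1,1)$-class appears---is precisely $h^{2,0}=\binom{d-1}{3}$. This is the Hodge-theoretic calculation already implicit in \Cref{explicitNL}: the generic Mumford-Tate group of $\VV$ is the full $\SO(H^2_{\prim})$ (classical, since $d\geq 5$), and imposing that the projection of a fixed primitive rational class to $H^{2,0}(X)$ vanishes amounts to $h^{2,0}$ independent $\CC$-linear conditions on the period map. With this matching of codimensions, the general components of $\NL_d$ correspond exactly to the typical components of $\HL(U_d,\VV)$ and the exceptional components correspond exactly to the atypical ones. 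The non-density theorem of \cite{2021arXiv210708838B} then yields \Cref{mainthm0}.

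The main obstacle is the careful translation between the classical codimension-based definition of \Cref{defityp} and the Mumford-Tate-theoretic notion of (a)typicality used in \cite{2021arXiv210708838B}, together with the verification of the running hypotheses of that theorem (in particular, the relevant positivity of $\VV$, which is immediate here since $h^{2,0}>0$). One must in particular ensure that every component of codimension $<h^{2,0}$ in $U_d$ is genuinely atypical and not merely \emph{weakly atypical} (i.e., arising from an intermediate Mumford-Tate subgroup whose associated Hodge subdatum already has small expected codimension in the ambient period domain). This reduces to a structural analysis of proper subgroups of $\SO(H^2_{\prim})$ stabilizing a rational sub-Hodge structure, and is the only non-formal step of the argument.
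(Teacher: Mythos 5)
Your high-level strategy is the paper's strategy: realize $\NL_d$ inside the Hodge locus of the primitive weight-$2$ VHS $\VV$, show that the exceptional components are exactly the atypical components of $\NL_d$, and then apply the machinery of \cite{2021arXiv210708838B}. However, two points in your sketch are off in a way that matters.

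First, the theorem you invoke from \cite{2021arXiv210708838B} does not say what you claim. There is no unconditional statement there that ``the atypical Hodge locus is contained in a proper Zariski closed subvariety.'' What is available (and what the paper uses, restated here as \Cref{geometricZP}) is a dichotomy: each irreducible component $Z$ of the Zariski closure of the atypical Hodge locus of positive period dimension is either (a) a maximal atypical special subvariety, or (b) has decomposable adjoint generic Mumford--Tate group $\G_Z^{\ad}=\mathbf{H}_Z^{\ad}\times\mathbf{L}_Z$ with $Z$ Hodge generic in a typical special subvariety. To rule out $Z=U_d$ you must exclude case (b) for the full base, and the paper does this by observing that the generic monodromy group of $(U_d,\VV)$ is simple (it is $\SO(2h^{2,0},h^{1,1}_{\prim})$ up to isogeny, by Lefschetz's classical theorem plus André--Deligne), so $\G^{\ad}$ cannot decompose nontrivially. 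This step is genuinely needed and you don't mention it; without it the conclusion does not follow.

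Second, your worry in the last paragraph---that a component of codimension $<h^{2,0}$ might be merely ``weakly atypical'' because its Mumford--Tate subdatum already has small expected codimension, and that resolving this ``reduces to a structural analysis of proper subgroups of $\SO(H^2_{\prim})$''---points in the wrong direction. If the Mumford--Tate (or monodromy) datum $(\mathbf{H}_Y,D_Y)$ of a component $Y$ is strictly contained in the datum $(\mathbf{H},D_H)$ cutting out the Noether--Lefschetz condition, then $\codim_D D_Y = \codim_D D_H + \codim_{D_H} D_Y = h^{2,0} + \codim_{D_H} D_Y \geq h^{2,0}$. So shrinking the Mumford--Tate group can only increase the expected codimension, never decrease it; any component with $\codim_{U_d}Y < h^{2,0}$ is automatically atypical. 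No case analysis of subgroups of $\SO(H^2_{\prim})$ is required---this is exactly \Cref{mainprop}(1), and it is a one-line inequality, not a structural classification. (The classification-type argument via Dynkin does appear in the paper, but only later, for the finer \Cref{cordynk}, not for the theorem at hand.)
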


We also briefly discuss a related question on the generic Picard rank
of the components of $\NL_d$.
\begin{question}(Ciliberto-Harris-Miranda, \cite[(page
  668)]{zbMATH04097545}) \label{rank}
What is the Picard group of a surface corresponding to a general point
of a general component of $\NL_d$?
\end{question}
The answer is expected to be $\Z^2$, but this is not known (see also \cite{zbMATH00007580} for
related discussion and results). Let us call a component $Y$ of $\NL_d$
\emph{Picard generic} if the Picard group of a surface corresponding
to a general point of $Y$ is $\Z^2$, and \emph{Picard exceptional}
otherwise. The same arguments behind the proof of \Cref{mainthm0} give
also the following:

\begin{thm}\label{lastthm}
The union of the Picard generic general components of $\NL_d$ is dense
in $U_d(\C)$, while the union of the 
Picard exceptional components is contained in a finite union of strict
\emph{special} 
subvarieties of $U_{d}$ for $\VV$ (in particular it is not 
Zariski-dense in $U_d$). 
\end{thm}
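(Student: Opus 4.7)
The plan is to interpret both assertions through the Zilber--Pink atypicality framework for variations of Hodge structure developed in \cite{2021arXiv210708838B}. Let $\VV$ denote the polarized integral variation of Hodge structure of weight $2$ on $U_d$ coming from the primitive cohomology $R^2\pi_*\Z_{\prim}$ of the universal smooth degree-$d$ surface $\pi \colon \mathcal{X} \to U_d$. Each irreducible component $Y \subset \NL_d$ is a Hodge-locus component, hence a \emph{special subvariety} of $U_d$ for $\VV$ in the sense of \emph{op.~cit.}, with generic Mumford--Tate group fixing the span of the extra algebraic classes carried by a general $X\in Y$.

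The first step is the expected-codimension bookkeeping. An independent primitive Hodge class $\alpha \in H^{1,1}(X) \cap H^2(X,\Q)_{\prim}$ imposes exactly $h^{2,0}=\binom{d-1}{3}$ linearly independent conditions on the infinitesimal period map, which is the content of the easy upper bound in \Cref{explicitNL}. Hence the smallest special subvariety of $U_d$ through a point $[X] \in Y$ on which the generic primitive Picard rank jumps by $r$ has expected codimension $r\cdot h^{2,0}$. In particular $r=1$ is the typical case, matching $\codim_{U_d} Y = h^{2,0}$ for general components.

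The second step is to translate the two types of exceptionality into atypicality. If $Y$ is Picard exceptional and general, then at a generic point of $Y$ the primitive Picard rank is $\geq 2$; so at least two independent new Hodge classes persist along $Y$, and the smallest special subvariety through the generic point of $Y$ realising them has expected codimension $\geq 2h^{2,0} > h^{2,0} = \codim_{U_d} Y$. Hence $Y$ is an atypical special subvariety of $U_d$ for $\VV$. Exceptional components of $\NL_d$ (those with $\codim < h^{2,0}$) are atypical already for $r=1$. In both cases the main atypicality theorem of \cite{2021arXiv210708838B} applies and produces finitely many strict special subvarieties of $U_d$ whose union contains every such component, which gives the second assertion of \Cref{lastthm}; this is the same mechanism used to prove \Cref{mainthm0}.

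The density assertion follows formally: by \cite{zbMATH04097545} the union of the general components of $\NL_d$ is Zariski-dense, and is in fact analytically dense in $U_d(\C)$ by Green's argument recalled in the introduction. Removing from it the Picard exceptional general components, which lie in a finite union of proper closed algebraic subvarieties of $U_d$, preserves this density, so the Picard generic general components are dense in $U_d(\C)$. The main obstacle is the atypicality bookkeeping of the second step: one must verify that the Mumford--Tate datum attached to a Picard exceptional component genuinely forces the expected codimension to scale linearly by $h^{2,0}$ per independent extra algebraic class, correctly accounting for the trivial class $c_1(\Oo(1))$ and for the generic Mumford--Tate group of $\VV$. Once this is set up, everything reduces cleanly to the main result of \cite{2021arXiv210708838B}.
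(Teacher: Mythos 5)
Your proof follows essentially the same route as the paper: express each component of $\NL_d$ as a special subvariety, compute expected codimensions via the period domain, use the Zilber--Pink dichotomy from \cite{2021arXiv210708838B} (Theorem~\ref{geometricZP}) to conclude non-Zariski-density of the atypical part, and obtain analytic density of the Picard generic general part by subtraction from the analytically dense $\NL_d$.

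Two small points of precision worth flagging. First, the paper does not claim that Green's argument directly establishes analytic density of the \emph{general} components; it establishes analytic density of $\NL_d$ as a whole, and one must combine this with the non-Zariski-density of the exceptional components (Theorem~\ref{mainthm}) and of the Picard exceptional general components to deduce analytic density of the Picard generic general ones. You invoke the non-Zariski-density only for the Picard exceptional part, so the chain of deductions is correct, but your phrase ``is in fact analytically dense by Green's argument'' should be read as shorthand for this two-step reduction rather than as a feature of Green's argument itself. Second, the codimension bookkeeping in your step two is sound (each independent primitive Hodge class moves you one step down in the flag of groups $\SO(2h^{2,0}, h^{1,1}_{\prim}-r)$, and $\codim_D D_H = r\,h^{2,0}$), and is the same as the Hodge--Lie-algebra computation in \Cref{sectionNL}; you also correctly note, implicitly via ``the same mechanism used to prove \Cref{mainthm0},'' that the simplicity of the generic monodromy group is what forces the output of \Cref{geometricZP}(b) to be a \emph{strict} special subvariety. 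No genuine gap.
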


In \Cref{sec4} we discuss generalizations of the theory
for very ample surfaces in arbitrary smooth threefolds, see
\Cref{thmfinal}. There we obtain \emph{abstract} finiteness
results for the exceptional components, and existence and density of the
general ones. Once more, this is a special case of the \emph{completed
  Zilber-Pink paradigm} envisioned by the authors in
\cite{2021arXiv210708838B}. (The adjective \emph{completed} refers to
the fact that we look at both the atypical and typical intersections,
whereas, in the original ZP viewpoint, only atypical intersections are
considered). See also \Cref{conj0}, for a similar finiteness regarding
a Lefschetz pencil $\mathcal{Y}\to \mathbb{P}^1$ that would follow
from the zero-dimensional case of ZP. 

\subsection{Remarks and questions}

Little is known about the exceptional components of $\NL_d$ (see 
\Cref{rmkhigher} for a brief discussion of the higher dimensional
case). Voisin described the components of maximal dimension:
\begin{thm}[Voisin]\label{thm2}
The components of $\NL_d$ have codimension $> 2d-7$, with the exception of the family of surfaces containing:
\begin{enumerate}
\item A line (this one has codimension $d-3$);
\item A conic (codimension $2d - 7$).
\end{enumerate} 
\end{thm}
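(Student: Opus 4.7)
The plan is to use Griffiths's infinitesimal description of the Noether-Lefschetz locus via the Jacobian ring. For a smooth $X \subset \PP^3$ of degree $d$ cut out by $F$, let $R = R(X) := \CC[x_0,\ldots,x_3]/J(F)$ be the Jacobian ring; it is an Artinian Gorenstein graded $\CC$-algebra of socle degree $4d-8$, equipped with a perfect Macaulay pairing $R^{a} \times R^{4d-8-a} \to R^{4d-8} \cong \CC$. Griffiths's residue isomorphism identifies $H^{p,q}_{\mathrm{prim}}(X) \cong R^{(q+1)d-4}$ for $p+q=2$, and the derivative of the period map in a direction $P \in T_{[X]} U_d \cong R^d$ is simply multiplication by $P$.

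First, I would set up the infinitesimal Noether-Lefschetz computation. Let $Y$ be an irreducible component of $\NL_d$, $[X] \in Y$ a smooth general point, and $\lambda \in H^{1,1}_{\mathrm{prim}}(X,\Q) \subset R^{2d-4}$ a primitive Hodge class cutting out $Y$ locally near $[X]$. Then $T_{[X]} Y \subset R^d$ is the kernel of the multiplication map $\mu_\lambda \colon R^d \to R^{3d-4}$, $P \mapsto P\lambda$. Via the Macaulay pairing this map is the transpose of $\nu_\lambda \colon R^{d-4} \to R^{3d-8}$, $Q \mapsto Q\lambda$, so
\[
\codim_{U_d} Y \;=\; \operatorname{rank}(\nu_\lambda) \;=\; \dim R^{d-4} - \dim\bigl(\operatorname{Ann}_R(\lambda) \cap R^{d-4}\bigr).
\]

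Next, assume $c := \codim_{U_d} Y \leq 2d-7$, and set $I := \operatorname{Ann}_R(\lambda)$ and $A := R/I$. The Macaulay pairing descends to a perfect pairing $A^{j} \times A^{2d-4-j} \to \CC$ via $(P,Q) \mapsto PQ\lambda$, so $A$ is itself an Artinian Gorenstein graded algebra of socle degree $2d-4$, and the hypothesis reads $\dim A^{d-4} = c \leq 2d-7$. The heart of the argument is to show that such a small value of $\dim A^{d-4}$ (together with the Gorenstein symmetry $\dim A^{j} = \dim A^{2d-4-j}$) so severely constrains the Hilbert function of $A$ that $\lambda$ is forced to be, up to a multiple of the hyperplane class, the primitive cohomology class of a line or a conic contained in $X$. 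This reverse-engineering is a Koszul-/Macaulay-type analysis of Artinian Gorenstein quotients of the Jacobian ring, in the spirit of Green's symmetrizer lemma and $h^{0}$-lemma; it constitutes the technical core of Voisin's argument.

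Finally, once $\lambda$ is identified with the primitive class of a line $L$ (resp.\ conic $C$) contained in $X$, the component $Y$ coincides with the closure of the locus of degree-$d$ surfaces containing a curve in the algebraic family of $L$ (resp.\ $C$). The standard dimension count in $\mathbb{P}H^0(\PP^3,\Oo(d))$, using that the locus of surfaces containing a fixed line has codimension $d+1$ while the family of lines in $\PP^3$ is $4$-dimensional, and that the locus containing a fixed conic has codimension $2d+1$ while the family of conics is $8$-dimensional, recovers codimensions $d-3$ and $2d-7$ respectively, matching the two announced families. Any remaining component of $\NL_d$ therefore has codimension strictly greater than $2d-7$. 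The main obstacle remains the middle step: reverse-engineering the geometric nature of $\lambda$ from a single-degree bound on the Hilbert function of $R/\operatorname{Ann}_R(\lambda)$.
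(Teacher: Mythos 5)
The paper does not prove this theorem; it states it as background and refers the reader to Voisin's original article (cited as \cite{zbMATH04193894}), so there is no in-paper argument to compare against. Your outline does reproduce the correct framework for Voisin's proof: Griffiths's residue calculus identifying the primitive Hodge pieces with graded pieces of the Jacobian ring $R$, the identification of the differential of the period map with polynomial multiplication, the infinitesimal Noether--Lefschetz condition as the rank of $\nu_\lambda$, and the passage to the Artinian Gorenstein quotient $A=R/\operatorname{Ann}_R(\lambda)$ of socle degree $2d-4$ are all the right reductions, and the concluding dimension counts for the line and conic families are correct.

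However, the central step — showing that $\dim A^{d-4}\le 2d-7$ forces $\lambda$ (modulo the hyperplane class) to be the class of a line or a conic — is left entirely undeveloped; it is not a one-shot ``reverse-engineering from a single-degree Hilbert-function bound.'' This is where all the difficulty of Voisin's theorem lies: it requires a delicate Koszul-cohomological analysis (Macaulay's growth bound, Gorenstein symmetry of $A$, Green's symmetrizer/$h^0$-lemmas, and a careful study of the base locus of the linear system defined by $\operatorname{Ann}(\lambda)$ in low degrees) occupying most of the original paper, with genuine case distinctions. As written, the proposal is a framework for the proof rather than a proof. Two smaller issues: the identification $T_{[X]}U_d\cong R^d$ holds only after quotienting by the $\mathbf{PGL}_4$-action (the Jacobian ideal in degree $d$ is precisely that orbit direction), and the asserted equality $\codim_{U_d}Y=\operatorname{rank}(\nu_\lambda)$ should in general be the inequality $\codim_{U_d}Y\ge\operatorname{rank}(\nu_\lambda)$, coming from $T_{[X]}Y\subseteq\ker\mu_\lambda$; fortunately that is the direction the argument actually needs.
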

The proof can be found in \cite{zbMATH04193894}. See also
\cite{zbMATH04103221, zbMATH00005091} for related results. In particular the Harris conjecture
is true for $d=5, 6,7$. We refer to \cite{zbMATH00562487} for a discussion about other
interesting finiteness related to the Noether-Lefschetz locus and to
\cite{zbMATH07383960} for a conjectural description of an infinite
number of (reduced) exceptional components of $\NL_d$. 

One can ask to what extent Voisin's construction is the only source
of counterexamples to Green's conjecture. For brevity we just discuss
the case where $d=p$ is a prime number--a similar question can be
formulated in general by defining \emph{primitive surfaces} as
the ones that do not come from a pull-back along $F: \mathbb{P}^3\to
\mathbb{P}^3$, where $F$ is given by polynomials of degree $d'$ with
no common zeroes, of surfaces of degree $d/d'$: 
 
\begin{question}[Refined Harris conjecture]\label{seconque}
For $p$ a prime number, are there only finitely many exceptional components of $\NL_p$?
\end{question}
As explained in the previous section, \Cref{mainthm0} cannot in general
 be improved to a finiteness statement but, in the case where $d$ is a
prime number, as far as we are aware, the finiteness proposed by
Harris could still be true. Finally we refer to \Cref{cordynk} for a simple example of how \Cref{mainthm0} can lead to finiteness results.

\subsection{Acknowledgements}
The first author thanks the organizers of the conference \emph{On
  Noether-Lefschetz and Hodge loci} (Aug. 2023) held at IMPA (in Rio
de Janeiro), where he attended several inspiring talks related to
$\NL_d$, by H. Movasati, R. Villaflor, and J. Duque. He also thanks
D. Urbanik and N. Khelifa for related discussions. B.K. is partially
supported by the Grant ERC-2020-ADG n.101020009 - TameHodge. 
Finally we thanks the referees for their careful reading.

\section{Some cases of Zilber-Pink}\label{section2}
This section is a specialization of the results and vocabulary
introduced in \cite{2021arXiv210708838B} in a much greater
generality. To understand the (polarized) variation of Hodge structures (VHS) $\VV \to S=U_d$ interpolating the
primitive cohomology groups $H^2(X,\Z)_{\prim}$, we look at the
associated holomorphic period map 
\begin{equation} \label{period0}
  \Phi: 
  S^{\an} \to \Gamma \backslash D,
\end{equation}
completely describing $\VV$.
Here $S^\an$ denotes the complex manifold analytification of $S$, $(\G, D)$ denotes the generic Hodge datum of $\VV$
and 
$\Gamma \backslash D$ is the associated Hodge variety. As famously
proved by Lefschetz \cite{zbMATH02597379} (see also \cite[Section
1.2]{zbMATH06342071}, and the discussion in
\cite[Rmk. 9.1]{2021arXiv210708838B}), we have that
$\G^{\operatorname{ad}}$ is $\SO(2h^{2,0},h^{1,1}-1)$, where $h^{2,0}=
\dim_\C H^{2,0}(X)$ and $h^{1,1}=\dim_{\C} H^{1,1}(X)$, for some
$[X]\in U_d$ (from now on we write $h^{1,1}_{\prim} :=h^{1,1}
-1$)\footnote{Implicitly we use the André-Deligne monodromy theorem, which rests upon the fixed-part and the semisimplicity
theorems, to observe that the monodromy is a normal subgroup of
  the derived subgroup of the generic Mumford-Tate group.}. The
Noether-Lefschetz locus lies in the more general tensorial Hodge locus
of $\VV_s$, namely the set of closed points $s\in S$ where the
Mumford-Tate group of the Hodge structure $\VV_s$ drops: 
\begin{displaymath}
\HL (S,\VV^{\otimes}):=\{s\in S(\C) : \MT(\VV_s)\subsetneq \MT(\VV)\}.
\end{displaymath}
In particular:
\begin{equation}\label{eqqq}
\NL_d\subset \HL(S,\VV^{\otimes})_{\pos}.
\end{equation}

The fact that we can consider just the Hodge locus $\HL(S,\VV^{\otimes})_{\pos}$ of \emph{positive
  period dimension} comes from the bound recalled in \Cref{explicitNL}
and the local/infinitesimal Torelli theorem of Griffiths \cite[Theorem
9.8,
Part I]{zbMATH03341218} which asserts that the differential of the period
map $ \Phi$ is injective at any point $s\in S$ (modulo the action of
the projective group $\mathbf{PGL}(3, \CC)$ on $U_d$), as long as $d\geq
4$.

Following \cite{2021arXiv210708838B}, an irreducible
subvariety $Z$ of $S$ is called \emph{special}
(resp. \emph{weakly special}) if any irreducible subvariety $W\subset S$ strictly
containing $Z$ has generic Mumford-Tate (resp. algebraic monodromy)
strictly bigger than the one of $Z$. 

The next definition of \cite{2021arXiv210708838B} distinguishes special subvarieties between the ones having
the ``expected'' codimension and the ones that
don't:
\begin{defi}\label{atypical}
  Let $Z\subset S$ be a special
  subvariety for $\VV$, with generic Hodge datum $(\G', D')$. It is
  said to be \emph{atypical} if $\Phi(S^{\an})$ and $\Gamma'
  \backslash D'$ do not intersect generically along $\Phi(Z^{\an})$: 
  \begin{equation} \label{equation atypical}
    \codim_{\Gamma\backslash D} \Phi(Z^{\an}) < \codim_{\Gamma\backslash
    D} \Phi(S^{\an}) + \codim_{\Gamma\backslash D} \Gamma'\backslash
  D'\;\;,
  \end{equation}
 
  \noindent
  Otherwise it is said to be \emph{typical}.  The \emph{atypical Hodge locus} $\HL(S,\VV^\otimes)_{\atyp} \subset \HL(S, \VV^\otimes)$
  (resp. the \emph{typical Hodge locus} $\HL(S,\VV^\otimes)_{\typ} \subset \HL(S, \VV^\otimes)$) is
  the union of the atypical (resp. strict typical) special
  subvarieties of $S$ for $\VV$. 
\end{defi}

\begin{rmk}
The above definition makes implicitly use of the
algebraicity theorem of the Hodge locus for arbitrary Hodge classes
\cite{CDK}: Lefschetz (1,1) theorem will not suffices for our
purposes. 
\end{rmk}

\subsection{Noether-Lefschetz components as special varieties}\label{sectionNL}
The upper bound $h^{2,0}$ from \Cref{explicitNL} can also be seen as
follows (see for example \cite[(page 46)]{carlson}, for more details). Let $\mathfrak{g}$ be the Hodge-Lie algebra associated to
$(\mathbf{G},D_G)$. We have  
\begin{displaymath}
2\dim \mathfrak{g}^{-2,2}=(h^{2,0}-1)h^{2,0}\;,
\end{displaymath}
\begin{displaymath}
\dim \mathfrak{g}^{-1,1}=h^{2,0} \, h^{1,1}_{\prim}.
\end{displaymath}
The condition defining $\NL_d$ is the one of having an extra Hodge
vector in $\VV$. In particular an irreducible component of $\NL_d$ is
a special subvariety of $U_d$ for $\VV$ of the form $\Phi^{-1}(\Gamma_H \backslash D_H)^0$, where
$\mathbf{H}\subset \mathbf{G}$ is isomorphic, over the real numbers, to the group
$\SO(2h^{2,0},h^{1,1}_{\prim}-1)$; and $(\cdot)^0$ denotes an
irreducible component.

At the level of the Hodge-Lie algebra, we observe that
$\mathfrak{h}^{-2,2}=\mathfrak{g}^{-2,2}$ and that $\dim
\mathfrak{h}^{-1,1}=h^{2,0}(h^{1,1}_{\prim}-1)$, as we are
parametrising the Hodge tuple
$(h^{2,0},(h^{1,1}_{\prim}-1),h^{0,2})$.
The components of $\NL_d$ thus have codimension in $U_d$ at most equal
to
$$ \codim_{\Gamma\backslash D} (\Gamma_H \backslash D_H) =
\codim_{D} D_H = \dim
\mathfrak{g}^{-1,1} - \mathfrak{h}^{-1,1} = h^{2,0}\;.$$

\subsection{Results and conjectures from our earlier work}
From \cite[Section 2]{2021arXiv210708838B}, we expect:

\begin{conj}[Zilber--Pink conjecture] \label{main conj}
 The atypical Hodge locus
  $\HL(S,\VV^\otimes)_{\atyp}$ is a finite union of atypical special
subvarieties of $S$ for $\VV$.
\end{conj}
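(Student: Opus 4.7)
The plan is to adapt the Pila--Zannier o-minimal strategy, which has been used to prove André--Oort for Shimura varieties, to general polarized variations of Hodge structure. The three pillars are (i) definability of the period map $\Phi\colon S^{\an}\to \Gamma\backslash D$ in $\mathbb{R}_{\mathrm{an},\exp}$ (Bakker--Klingler--Tsimerman), (ii) the Ax--Schanuel theorem for period maps of Bakker--Tsimerman, supplying the functional transcendence input, and (iii) arithmetic complexity/height estimates for the Hodge sub-data $(\mathbf{H},D_H)$ that cut out atypical special subvarieties.

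I would first establish the geometric part of the conjecture: the closure of $\HL(S,\VV^\otimes)_{\atyp}$ is a proper closed algebraic subvariety of $S$. The main tool is Ax--Schanuel, used to show that each maximal atypical special $Z\subset S$ arises as an irreducible component of $\Phi^{-1}(\Gamma_H\backslash D_H)^0\cap W$ for some Hodge sub-datum $(\mathbf{H},D_H)\subsetneq (\mathbf{G},D)$ and a weakly special $W\supseteq Z$ of controlled dimension. Combined with o-minimal GAGA (algebraicity of closed analytic definable subsets of an algebraic variety), this should promote the a priori analytic, countable union of atypical components to a single proper closed algebraic subvariety of $S$. This is essentially the content of the authors' earlier work \cite{2021arXiv210708838B}, and is already enough to prove non-density of the atypical Hodge locus, which is the form in which the conjecture gets used in \Cref{mainthm0}.

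To upgrade ``proper closed algebraic subvariety'' to ``finite union of atypical special subvarieties'', I would follow the Pila--Zannier blueprint: lift an atypical component to a fundamental set $\mathfrak{F}\subset D$, bound polynomially the number of its algebraic lifts on the transcendental part of the definable intersection via Pila--Wilkie, and then contradict this with a super-polynomial lower bound for the size of the Galois orbit of the Hodge sub-datum $(\mathbf{H},D_H)$ in terms of its complexity. The main obstacle is this arithmetic lower bound: outside the Shimura setting no Colmez-type estimate is currently available for general variations of Hodge structure, so in full generality the argument remains conditional. I would therefore aim to prove the statement assuming such height bounds, reducing \Cref{main conj} to a clean arithmetic statement, and then verify the arithmetic input unconditionally in favorable cases (Shimura type, sufficiently CM-rich variations, situations isogenous to products of elliptic curves, etc.), where it is already known.
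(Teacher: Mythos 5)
The statement you were asked to prove is \Cref{main conj}, which the paper itself presents only as a \emph{conjecture}: there is no proof of it anywhere in the paper, and indeed the Zilber--Pink conjecture for arbitrary polarized $\Z$-VHS is a major open problem. What the paper actually proves and uses is \Cref{geometricZP} (the ``geometric'' Zilber--Pink theorem from \cite{2021arXiv210708838B}), a strictly weaker statement: it controls the Zariski closure of the union of the \emph{positive-period-dimension} atypical special subvarieties, and in the non-special-subvariety case (b) it yields structural information (a product decomposition of $\G_Z^{\ad}$) rather than finiteness. That geometric theorem is all that is needed for \Cref{mainthm0}, \Cref{mainthm}, and \Cref{thmfinal}. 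The full finiteness assertion of \Cref{main conj} — in particular the zero-dimensional case, as in \Cref{conj0} — is explicitly flagged as beyond current methods.

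Your proposal is therefore not a proof, and to your credit you say so: the pillars (i) definability in $\R_{\mathrm{an},\exp}$, (ii) Ax--Schanuel for period maps, and (iii) a Pila--Wilkie count against a Galois-orbit lower bound are the correct Pila--Zannier framework, and pillar (i)--(ii) do give the geometric statement. But the arithmetic lower bound in (iii) — a lower bound on Galois orbits, or equivalently a height/complexity bound on the Hodge sub-data $(\mathbf{H},D_H)$, valid for arbitrary $\Z$-VHS — is precisely the missing ingredient, and no unconditional version exists outside the abelian/Shimura setting. So the honest assessment is: you correctly identified the state of the art and the obstruction, but the statement remains a conjecture both in the paper and after your proposal; a referee would not accept it as a proof. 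If your goal is to engage with what the paper actually establishes, the right target is \Cref{geometricZP} (and its specialization to $\NL_d$ in \Cref{mainthm}), not \Cref{main conj} itself.
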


\begin{conj}[Density of the typical Hodge locus] \label{conj-typical}
 If 
$\HL(S, \VV^\otimes)_\typ$ is non-empty then it is dense (for the analytic topology) in
$S$.
\end{conj}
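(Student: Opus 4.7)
The plan is to establish density via a saturation-and-translation argument, exploiting the group-theoretic structure of the Hodge variety $\Gamma \backslash D$. Assume the typical Hodge locus is non-empty, so that there exists a strict typical special subvariety $Z_0 \subset S$, with generic Hodge datum $(\mathbf{M}, D_M) \subset (\mathbf{G}, D)$. By definition, $\Phi(S^{\an})$ and the Hodge subvariety $\Gamma_M \backslash D_M$ intersect with the expected codimension inside $\Gamma \backslash D$ along $\Phi(Z_0^{\an})$.

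The first step is to produce a large family of ``potential'' typical special subvarieties by translating the sub-Hodge datum within $(\mathbf{G},D)$. For $g \in \mathbf{G}(\R)^+$, the conjugate datum $(g\mathbf{M}g^{-1}, gD_M)$ is again a sub-Hodge datum, and typicality is a purely dimension-theoretic condition preserved by such translation. The key observation is that the $\Gamma^{\der}$-orbit of $D_M$ in $D$ is dense in the $\mathbf{G}(\R)^+$-orbit of $D_M$, by a Borel-type density argument applied to the $\QQ$-algebraic homogeneous space parametrizing sub-Hodge data of the same type as $\mathbf{M}$. This relies on strong approximation/ergodicity for the action of $\Gamma^{\der}$ on the appropriate flag variety inside $D$, and on the André--Deligne monodromy theorem (invoked in the footnote of \Cref{section2}) to ensure that the monodromy generates a sufficiently large subgroup of $\mathbf{G}^{\der}(\Z)$.

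The second step is to transfer this density of translates to density of intersections. Fix any $s_0 \in S$ and an arbitrarily small analytic neighborhood $U$ of $\Phi(s_0)$ in $\Gamma \backslash D$. By the first step, one can find $\gamma \in \Gamma$ such that $\gamma \cdot (\Gamma_M \backslash D_M)$ passes through $U$ arbitrarily close to $\Phi(s_0)$. By typicality the expected codimension of intersection equals the actual codimension of $\Phi(Z_0^\an)$, so an open-ness/transversality argument (an analytic implicit function theorem adapted to the horizontal directions imposed by Griffiths transversality) shows that $\Phi(S^\an) \cap \gamma \cdot (\Gamma_M \backslash D_M)$ is non-empty inside $U$. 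Pulling back along $\Phi$ yields a typical special subvariety of $S$ for $\VV$ meeting the preimage of $U$, and hence lying arbitrarily analytically close to $s_0$.

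The main obstacle is justifying the local transversality step in a uniform manner: one must verify that, under sufficiently small perturbations of the sub-Hodge datum, the intersection with $\Phi(S^\an)$ remains non-empty and of expected dimension, despite the constraints of horizontality. Equivalently, one needs an analytic openness property of the map sending a point of the parameter space of conjugates of $\mathbf{M}$ to its intersection with $\Phi(S^\an)$, defined only on the locus where this intersection is non-empty. The delicate point is to combine the real-analytic density from the second paragraph with the complex-analytic transversality of $\Phi$, and to show the former produces actual intersection points (not merely near-misses in $\Gamma \backslash D$). This is where the hypothesis that $Z_0$ is \emph{typical}, rather than merely special, plays the decisive role.
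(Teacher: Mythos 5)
This is stated in the paper as a \emph{conjecture}, not a theorem: the paper gives no proof of \Cref{conj-typical}, and merely points the reader to \cite[Thm.~10.1 and Rmk.~10.2]{2021arXiv210708838B} and to the (then-recent) works \cite{2022arXiv221110592E, 2023arXiv230316179K} for partial results in this direction. So there is no proof to compare against, and the honest assessment is whether your proposal actually closes the gap — it does not.

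Two concrete problems. First, a structural confusion about where the translates live. In your second step you claim to ``find $\gamma \in \Gamma$ such that $\gamma \cdot (\Gamma_M \backslash D_M)$ passes through $U$ arbitrarily close to $\Phi(s_0)$.'' But $\Gamma_M\backslash D_M$ denotes the image of $D_M$ in $\Gamma\backslash D$, and translation by $\gamma \in \Gamma$ is the identity on that quotient: $\gamma\cdot(\Gamma_M\backslash D_M) = \Gamma_M\backslash D_M$. The translates $\gamma D_M$ do give genuinely distinct objects, but only upstairs in $D$; to run the argument one must work with the lifted period map $\widetilde\Phi\colon \widetilde{S^\an}\to D$ and the family $\{\gamma D_M\}_{\gamma\in\Gamma}$ there, then descend. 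As written, the second step is vacuous.

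Second, and more importantly, the step you flag as ``the main obstacle'' is not a technicality to be handled but precisely the mathematical content of the conjecture. Density of the $\Gamma$-orbit of $D_M$ in the relevant real flag variety (step one, which is a Borel density / Ratner-type statement) gets you arbitrarily close ``near-misses,'' but Griffiths transversality constrains $\widetilde\Phi(\widetilde{S^\an})$ to a proper horizontal subbundle, and there is no general implicit-function-type argument that converts a nearby $\gamma D_M$ into an actual transverse intersection with $\widetilde\Phi(\widetilde{S^\an})$ of the expected codimension. Typicality gives a numerical equality of codimensions at the one given point of $Z_0$, not a uniform quantitative transversality statement across all of $\widetilde{S^\an}$, which is what the openness argument needs. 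This is exactly what makes the problem hard; the works cited in the paper resolve it by a very different route (a volume/equidistribution computation proving that the ``expected'' count of intersections is positive and then extracting genuine intersection points), not by the soft translate-and-perturb argument sketched here. So while the overall heuristic is the right one, the proposal leaves the decisive step as an acknowledged gap and therefore does not constitute a proof.
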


We recall here a special case of \cite[Thm. 6]{2021arXiv210708838B}
(we refer to \emph{op. cit.} for more precise statements). (Regarding
the latter conjecture, see \cite[Thm. 10.1 and
Rmk. 10.2]{2021arXiv210708838B}, we notice here that it is a consequence of the former). 
\begin{thm}[Geometric Zilber--Pink]\label{geometricZP}
Let $Z$ be an irreducible component of the Zariski closure
of the union of the atypical special subvarieties of positive period
dimension in $S$. Then: 

\begin{itemize}
  \item[(a)] Either $Z$ is a maximal atypical special subvariety;
    \item[(b)] Or the adjoint Mumford-Tate group $\G_Z^\ad$
  decomposes as a non-trivial product $\mathbf{H}^\ad_Z \times \mathbf{L}_Z$; $Z$ contains a Zariski-dense
set of fibers of $\Phi_{\mathbf{L}_{Z}}$ which are atypical weakly special
subvarieties of $S$ for $\Phi$, where (possibly up to an \'{e}tale covering) $$
\Phi_{|Z^\an}= (\Phi_{\mathbf{H}_{Z}}, \Phi_{\mathbf{L}_{Z}}): Z^\an \to  \Gamma_{\G_{Z}}\backslash D_{G_{Z}}= \Gamma_{\mathbf{H}_{Z}}
\backslash D_{H_{Z}} \times  \Gamma_{\mathbf{L}_{Z}}\backslash D_{L_{Z}} \subset \Gamma \backslash
D
\;\;;$$
and $Z$ is Hodge generic in a (Hodge) special subvariety $\Phi^{-1}(
\Gamma_{\G_{Z}}\backslash D_{G_{Z}})^0$ of $S$ for $\Phi$ which is typical. 
\end{itemize}

\end{thm}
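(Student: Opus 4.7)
The plan is to analyze $Z$ through its generic Hodge datum and the minimal special subvariety of $S$ containing it, then dichotomize according to whether that subvariety is atypical or typical---the typical case being the source of the product decomposition in (b). The central tool is the Ax--Schanuel theorem for variations of Hodge structure (Bakker--Tsimerman), combined with dimension bookkeeping coming from \eqref{equation atypical}.

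First I would attach to $Z$ the irreducible component $M^0 = \Phi^{-1}(\Gamma_{\G_Z} \backslash D_{G_Z})^0$ containing it, where $(\G_Z, D_{G_Z})$ is the generic Hodge datum of $Z$. The algebraicity of the tensorial Hodge locus due to Cattani--Deligne--Kaplan \cite{CDK} ensures that $M^0$ is a special subvariety, in which $Z$ is Hodge generic by construction. If $M^0$ is atypical in $S$ for $\VV$, then, since $Z \subseteq M^0$ has positive period dimension, so does $M^0$, and hence $M^0$ belongs to the union whose Zariski closure produces $Z$. Because $Z$ is an irreducible component of that closure and $Z \subseteq M^0$, one concludes $Z = M^0$, and the same component property forces maximality. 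This is case (a).

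Otherwise $M^0$ is typical, which already establishes the last assertion of (b). It remains to produce the product decomposition $\G_Z^\ad = \mathbf{H}_Z^\ad \times \mathbf{L}_Z$, the corresponding factorization of $\Phi_{|Z^\an}$, and the identification of the Zariski-dense family of atypical special subvarieties $Z_i \subsetneq Z$ (which exist by hypothesis, since $Z$ is an irreducible component of the Zariski closure of atypical specials) as fibers of $\Phi_{\mathbf{L}_Z}$. A codimension count comparing \eqref{equation atypical} applied to $Z_i \subseteq S$ with the typicality of $M^0$ shows that each such $Z_i$ is already atypical relative to the restricted period map $\Phi_{|Z^\an}$, i.e.\ weakly atypical in $Z$.

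The main obstacle is the conversion of this accumulation of weakly atypical subvarieties into the group-theoretic splitting of $\G_Z^\ad$. Here I would invoke the Ax--Schanuel theorem of Bakker--Tsimerman for VHS: the Zariski density of the $Z_i$ in $Z$ produces algebraic relations between $\Phi(Z^\an)$ and a countable family of weakly special subvarieties of $\Gamma_{\G_Z}\backslash D_{G_Z}$ that Ax--Schanuel can only explain through the existence of an algebraic subgroup $\mathbf{L}_Z \subseteq \G_Z^\ad$ whose orbits contain the $Z_i$. A further dimension-excess argument, using the atypicality in \eqref{equation atypical} and the typicality of $M^0$, forces $\mathbf{L}_Z$ to appear as a direct factor of $\G_Z^\ad$; the splitting $\G_Z^\ad = \mathbf{H}_Z^\ad \times \mathbf{L}_Z$, the factorization $\Phi_{|Z^\an} = (\Phi_{\mathbf{H}_Z}, \Phi_{\mathbf{L}_Z})$, and the identification of each $Z_i$ as a fiber of $\Phi_{\mathbf{L}_Z}$ are then formal consequences. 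The delicate technical step is to organize the infinite family $\{Z_i\}$ into a uniform algebraic parameter space on which Ax--Schanuel can be applied, which requires a careful monodromy analysis as in the general framework of \cite{2021arXiv210708838B}.
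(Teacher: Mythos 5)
The paper does not actually prove this theorem: it is explicitly recalled as a special case of Theorem~6 of the authors' earlier work \cite{2021arXiv210708838B}, and the reader is referred there for the argument. So there is no internal proof here to compare against.

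That said, your sketch does follow the broad strategy of the cited proof. The case~(a) analysis is essentially complete and correct: with $M^0 = \Phi^{-1}(\Gamma_{\G_Z}\backslash D_{G_Z})^0$ the smallest special subvariety in which $Z$ is Hodge generic, if $M^0$ is atypical of positive period dimension then $M^0$ lies in the set being closed, and the irreducible-component hypothesis forces $Z = M^0$, which must then be maximal. Your codimension bookkeeping showing that, when $M^0$ is typical, atypicality of $Z_i$ in $S$ descends to weak atypicality of $Z_i$ inside $Z$ is also a genuine and correct step of the argument. Where the sketch falls short is precisely the passage you flag as ``delicate'': converting a Zariski-dense family of weakly atypical subvarieties inside $Z$ into the direct-product decomposition $\G_Z^\ad = \mathbf{H}_Z^\ad \times \mathbf{L}_Z$ via the Bakker--Tsimerman Ax--Schanuel theorem. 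This requires (i) packaging the countably many $Z_i$ into finitely many positive-dimensional algebraic families so that Ax--Schanuel applies uniformly, and (ii) showing that the resulting defect is realized by a \emph{normal} factor of the semisimple adjoint group, not merely some subgroup. You acknowledge (i) and cite the reference for it, and you assert (ii) without argument (``a further dimension-excess argument\dots forces $\mathbf{L}_Z$ to appear as a direct factor''). That is exactly where the real content of the theorem lives, so while the outline is faithful to the proof in \cite{2021arXiv210708838B}, it is a description of the argument's architecture rather than a proof.
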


\begin{rmk}\label{rmkhigher}
In the terminology of \cite[Section 4.6]{2021arXiv210708838B}, as long
as $d\geq 5$, the VHS $\VV\to U_d$ has level 2. In higher
level, we proved that every irreducible component of the Hodge locus
is atypical. This applies to the parameter space of of smooth
hypersurfaces of degree $d$ in $\mathbb{P}^{n+1}$, if $n=3$ and $d
\geq 5$; $n=4$ and $d\geq 6$; $n=5,6,8$ and $d \geq 4$; and $n=7$ or
$\geq 9$ and $d \geq 3$, see Corollary 1.6 in \emph{op. cit.}. In
particular, in these cases, the Hodge locus of positive period
dimension is itself non-Zariski dense. On a different direction, such
result has also found some number-theoretic application to the
integral points of the moduli spaces of smooth hypersurfaces
\cite{zbMATH07745044}. 
\end{rmk}

\section{Proofs}
Let $d\geq 5$. We can now state the precise version of \Cref{mainthm0}:
\begin{thm}\label{mainthm}
The union of the exceptional (i.e. of codimension $< h^{2,0}$)
components of $\NL_d$ is contained in a finite union of strict \emph{special}
subvarieties of $U_{d}$ for $\VV$ (in particular it is not
Zariski-dense in $U_d$). 
\end{thm}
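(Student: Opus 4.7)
The plan is to realize the exceptional components of $\NL_d$ as atypical special subvarieties of positive period dimension, and then invoke the geometric Zilber--Pink theorem \Cref{geometricZP} essentially verbatim.

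First, the Hodge-Lie algebra computation of \Cref{sectionNL} shows that every irreducible component $Y$ of $\NL_d$ is a special subvariety of $U_d$ for $\VV$ whose associated sub-Hodge datum has codimension exactly $h^{2,0}$ in $(\G,D)$. By Griffiths' local Torelli theorem, for $d\geq 4$ the period map $\Phi$ is an immersion modulo the free $\mathbf{PGL}(4,\CC)$-action on $U_d$, which yields
\[
\codim_{\Gamma\backslash D}\Phi(Y^{\an}) \;=\; \codim_{\Gamma\backslash D}\Phi(U_d^{\an}) + \codim_{U_d}Y.
\]
Comparing with \Cref{atypical}, $Y$ is typical iff $\codim_{U_d}Y=h^{2,0}$, i.e.\ iff it is general in the sense of \Cref{defityp}: the exceptional components of $\NL_d$ are exactly the atypical ones. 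The upper bound $\codim_{U_d} Y\leq h^{2,0}$ from \Cref{explicitNL} gives $\dim Y \geq \dim U_d - h^{2,0}$, which already for $d=5$ exceeds $\dim \mathbf{PGL}(4,\CC)=15$; hence every exceptional component has positive period dimension.

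Applying \Cref{geometricZP} to $(U_d,\VV)$, every exceptional component of $\NL_d$ is contained in an irreducible component $Z$ of the Zariski closure of the union of all atypical special subvarieties of $U_d$ of positive period dimension for $\VV$. By \Cref{geometricZP}, either (a) $Z$ is a maximal atypical special subvariety --- of which there are finitely many, each a strict subvariety of $U_d$ since $U_d$ itself is Hodge-generic, hence typical --- or (b) $Z$ sits Hodge-generically inside a typical special subvariety $Z'=\Phi^{-1}(\Gamma_{\G_Z}\backslash D_{\G_Z})^{0}$, and the generic adjoint Mumford--Tate group $\G_Z^{\ad}$ admits a non-trivial product decomposition $\mathbf{H}_Z^{\ad}\times \mathbf{L}_Z$.

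The only real point is to verify that in case (b) the subvariety $Z'$ is strict in $U_d$. If $Z'=U_d$, then $\G_Z=\G$, and the splitting in (b) would give a non-trivial decomposition of $\G^{\ad}$. But as recalled in \Cref{section2}, $\G^{\ad}$ is $\SO(2h^{2,0},h^{1,1}_{\prim})$, whose complexified Lie algebra is $\mathfrak{so}(n,\CC)$ with $n=2h^{2,0}+h^{1,1}_{\prim}=b_2(X)-1 \geq 52$ for $d\geq 5$; this is a simple Lie algebra, contradicting the non-trivial splitting. Hence $Z'\subsetneq U_d$. Combining (a) and (b), and using that the Zariski closure in question has only finitely many irreducible components, the union of exceptional components of $\NL_d$ is contained in a finite union of strict special subvarieties of $U_d$ for $\VV$, as claimed. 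The only (modest) obstacle in the argument is precisely the simplicity check for $\G^{\ad}$ that rules out $Z'=U_d$ in case (b); everything else is translating the definitions into the Zilber--Pink framework and quoting \Cref{geometricZP}.
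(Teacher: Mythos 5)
Your argument is correct and follows essentially the same route as the paper: deduce from the Hodge-Lie algebra computation that exceptional implies atypical, then apply \Cref{geometricZP}, handling alternative (b) via simplicity of $\G^{\ad}$. The paper packages the first step as \Cref{mainprop}(1) and its proof of \Cref{mainthm} then cites simplicity of the generic monodromy group to rule out $Z'=U_d$, which is the same observation you make (your explicit check that $\mathfrak{so}(n,\CC)$ is simple for $n=b_2(X)-1\geq 52$ is a useful spelling-out).

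One imprecision worth flagging: you assert that the sub-Hodge datum associated to a component $Y$ of $\NL_d$ has codimension \emph{exactly} $h^{2,0}$ in $(\G,D)$, and therefore that $Y$ is typical iff $\codim_{U_d}Y=h^{2,0}$, i.e.\ that exceptional and atypical coincide. This is too strong. The generic Hodge/monodromy datum $(\G_Y,D_Y)$ of $\VV|_Y$ is \emph{contained} in the stabilizer datum $(\mathbf{H},D_H)$ with $\codim_D D_H=h^{2,0}$, but it can be strictly smaller, so $\codim_D D_Y\geq h^{2,0}$ with possible strict inequality. The correct consequence of \Cref{atypical} is therefore only the one-sided implication: exceptional $\Rightarrow$ atypical (a general component can still be atypical if its monodromy is smaller than expected, cf.\ \Cref{mainprop}(2) and \Cref{remmm}). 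Since your proof only uses this one direction, the theorem's proof is unaffected; but the stated equivalence is false as written.
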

In particular each irreducible component $W$ of the Zariski closure of
the union of the exceptional components is contained in a strict special
subvariety of $U_d$ of the form $\Phi^{-1}(\Gamma_W
\backslash D_W)^0$, where $(\mathbf{G}_W,D_W)$ is its associated Hodge
datum, strictly contained in the generic Hodge datum $(\mathbf{G},D)$ of
$(U_d,\VV)$. Since the components of $\NL_d$ are maximal among the
components of the Hodge locus of $\VV$ having an extra Hodge vector,
either $W$ is one of the exceptional components, or
the Mumford-Tate group $\mathbf{G}_W$ has to be associated to some
non-trivial Hodge class in some tensorial construction of $\VV_{|
  W}$. We don't know whether such a Hodge tensor comes from an
algebraic cycle in a suitable power of the generic surface $X$ of
$W$. This is what happens in \cite{zbMATH00027662}, and \Cref{mainthm} could be helpful to approach
\Cref{seconque}. Indeed, we record here an application to a finiteness question of the reasoning just explained.

\begin{cor}\label{cordynk}
Among the exceptional components of $\NL_d$, only finitely many have algebraic monodromy isomorphic, over the real numbers, to the group
$H:=\SO(2h^{2,0},h^{1,1}_{\prim}-1)$.
\end{cor}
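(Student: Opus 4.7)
The plan is to apply Theorem \ref{mainthm} directly. That theorem guarantees a finite collection of strict special subvarieties $W_1, \dots, W_n$ of $U_d$ for $\VV$ whose union contains every exceptional component of $\NL_d$. It therefore suffices to show that each exceptional component $Y$ with algebraic monodromy (over $\RR$) isomorphic to $H = \SO(2h^{2,0}, h^{1,1}_{\prim}-1)$ coincides with some $W_i$, giving the desired bound of at most $n$ such components.

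Fix such a $Y$ and an index $i$ with $Y \subseteq W_i$, and let $(\mathbf{G}_{W_i}, D_{W_i})$ be the generic Hodge datum of $W_i$, with algebraic monodromy $L_i \triangleleft \mathbf{G}_{W_i}^{\der}$. The inclusion $Y \subseteq W_i$ gives $H \subseteq L_i \subseteq \mathbf{G}^{\der}$. As recalled in Section \ref{sectionNL}, the group $H^{\ad}$ is the stabilizer inside $\mathbf{G}^{\ad} \cong \SO(2h^{2,0}, h^{1,1}-1)$ of a non-isotropic vector in the standard orthogonal representation; by the classical classification of reductive subgroups of $\SO(p,q)$, this stabilizer is maximal connected. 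Pulling back to covers, this forces $L_i \in \{H, \mathbf{G}^{\der}\}$. The case $L_i = \mathbf{G}^{\der}$ is excluded as follows: it would force $\mathbf{G}_{W_i}^{\der} = \mathbf{G}^{\der}$, and since the image of the Deligne torus at any point of $W_i$ lies in $\mathbf{G}_{W_i}$ but (because the VHS $\VV$ has weight $2$) is not contained in $\mathbf{G}^{\der}$, we would obtain $\mathbf{G}_{W_i} = \mathbf{G}$, contradicting that $W_i$ is a strict special subvariety.

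Hence $L_i = H$, and the same maximality argument at the Mumford-Tate level yields $\mathbf{G}_{W_i}^{\der} = H$. Thus $W_i$ is itself a component of the Hodge locus defined by an extra $(1,1)$-class and, by the maximality of $\NL_d$-components among such Hodge-locus components (recorded in the paragraph preceding the corollary), $W_i$ is itself an irreducible component of $\NL_d$ with algebraic monodromy $H$. Finally, both $Y \subseteq W_i$ kill exactly one $(1,1)$-class (this is exactly what algebraic monodromy $H$ encodes), and those classes must be proportional over $\QQ$: otherwise generic points of $Y$ would carry two independent Hodge $(1,1)$-classes, so the algebraic monodromy of $Y$ would stabilize both and be strictly smaller than $H$, a contradiction. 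Therefore $Y = W_i$, bounding the number of exceptional components with monodromy $H$ by $n$. The step I expect to be the main obstacle is the clean justification that $H$ is maximal connected inside $\mathbf{G}^{\der}$, combined with the weight-$2$ central-torus analysis that excludes $L_i = \mathbf{G}^{\der}$; both depend on the explicit description of $\mathbf{G}$ provided by the polarized VHS on surfaces.
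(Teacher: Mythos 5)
Your proposal is correct and follows essentially the same strategy as the paper's proof: reduce via \Cref{mainthm} to the finitely many strict special subvarieties $W_1,\dots,W_n$, then exploit the maximality of $H=\SO(2h^{2,0},h^{1,1}_{\prim}-1)$ inside $\mathbf{G}^{\der}\cong\SO(2h^{2,0},h^{1,1}_{\prim})$ (this is the Dynkin-type maximality result the paper cites) to constrain the $W_i$. The paper phrases it as a contradiction (if infinitely many such components existed, one $W_i$ would have Mumford--Tate group strictly between $H$ and $\mathbf{G}$, impossible by maximality), whereas you argue directly that each such $Y$ must coincide with one of the $W_i$; these are two presentations of the same underlying argument.
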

\begin{proof}[Proof of \Cref{cordynk}]
If there were infinitely many exceptional components with monodromy group (abstractly) isomorphic to $H$, \Cref{mainthm} would imply the existence of a sub-Mumford-Tate datum $(\mathbf{G}_W,D_W)$ strictly contained in the generic Hodge datum $(\mathbf{G},D)$ with the property that $\mathbf{G}_{W,\R}$ strictly contains a subgroup isomorphic to $H$. The very last assertion contradicts the maximality of $\SO(2h^{2,0},h^{1,1}_{\prim}-1)$ in $G=\SO(2h^{2,0},h^{1,1}_{\prim})$, established by Dynkin \cite[Thm. 1.2]{zbMATH03125754} (the argument in \emph{op. cit.} is over $\C$, but the statement over $\R$ follows immediately in this case). Therefore the components of $\NL_d$ under examination form a finite collection.
\end{proof}

The main ingredient in the proof of \Cref{mainthm} is the following:

\begin{prop}\label{mainprop}
Every irreducible component $Y$ of $\NL_d$ is special of positive
period dimension, and:
\begin{enumerate}
\item If $Y$ is exceptional (in the sense of \Cref{defityp}), then it is atypical (in the sense of \Cref{atypical});
\item If $Y$ is general, then it is typical if and only if it it has
  the expected generic Mumford-Tate group  (i.e. isogenous to
  $\mathbb{G}_m\times \operatorname{SO}(2h^{2,0},h^{1,1}_{\prim}-1))$.  
\end{enumerate}
\end{prop}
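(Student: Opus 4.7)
The plan is to translate both assertions into codimension comparisons inside the period domain, using the local Torelli theorem of Griffiths (modulo the projective group action) and the explicit tautological Hodge subdatum already recalled in \S\ref{sectionNL}. First I would check that each irreducible component $Y$ of $\NL_d$ is special of positive period dimension. Specialness follows from the algebraicity theorem for Hodge loci: $Y$ is the locus where a chosen integral primitive $(1,1)$-class of $\VV$ extends as a flat section, hence is of the form $\Phi^{-1}(\Gamma_H \backslash D_H)^0$ for the Hodge subdatum $(H, D_H)\subsetneq(\G, D)$ cut out by that class. Positive period dimension then follows by combining the upper bound $\codim_{U_d} Y \leq h^{2,0}$ of \Cref{explicitNL} with local Torelli (which says that $\Phi$ is immersive modulo the $15$-dimensional action of $\mathbf{PGL}(3,\CC)$): for $d \geq 5$ one has $\dim U_d - h^{2,0} > 15$, so $\dim \Phi(Y^{\an}) > 0$.

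Next I would reformulate atypicality numerically. Since the generic fibers of $\Phi$ are precisely the $\mathbf{PGL}(3,\CC)$-orbits, the identity
\begin{equation*}
\codim_{\Gamma\backslash D} \Phi(Y^{\an}) \;=\; \codim_{\Gamma\backslash D}\Phi(S^{\an}) + \codim_{U_d} Y
\end{equation*}
holds, so the atypicality inequality of \Cref{atypical} applied to $Y$ with generic Hodge datum $(\G_Y, D_Y)$ collapses to
\begin{equation*}
\codim_{U_d} Y \;<\; \codim_{\Gamma\backslash D}(\Gamma_{\G_Y}\backslash D_{\G_Y}).
\end{equation*}
On the other hand, the calculation of \S\ref{sectionNL} for the tautological subdatum, with $H^{\ad}\cong\SO(2h^{2,0}, h^{1,1}_{\prim} - 1)$ attached to the extra Hodge class, gives $\codim_{\Gamma\backslash D}(\Gamma_H\backslash D_H) = h^{2,0}$. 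Since $\G_Y$ also fixes the same class, $\G_Y^{\ad} \subseteq H^{\ad}$ and $D_{\G_Y}\subseteq D_H$, forcing
\begin{equation*}
\codim_{\Gamma\backslash D}(\Gamma_{\G_Y}\backslash D_{\G_Y}) \;\geq\; h^{2,0},
\end{equation*}
with equality if and only if $\G_Y^{\ad} = H^{\ad}$.

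The two assertions then follow at once. For (1), if $Y$ is exceptional then $\codim_{U_d} Y < h^{2,0} \leq \codim_{\Gamma\backslash D}(\Gamma_{\G_Y}\backslash D_{\G_Y})$, so $Y$ is atypical. For (2), if $Y$ is general then $\codim_{U_d} Y = h^{2,0}$, and typicality becomes $\codim_{\Gamma\backslash D}(\Gamma_{\G_Y}\backslash D_{\G_Y}) = h^{2,0}$, i.e. $\G_Y^{\ad}\cong\SO(2h^{2,0}, h^{1,1}_{\prim}-1)$; combined with the central $\Gm$ always present in the Mumford--Tate group of a non-trivial polarized weight-$2$ VHS, this is the expected isogeny condition. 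The only input that is not purely formal is the signature identification feeding the computation of \S\ref{sectionNL}: the stabilizer of a primitive Hodge $(1,1)$-class inside $\G^{\ad}=\SO(2h^{2,0}, h^{1,1}-1)$ must be an orthogonal group of signature $(2h^{2,0}, h^{1,1}_{\prim}-1)$, a consequence of the Hodge--Riemann bilinear relations (they pin down that the polarization is negative definite on real primitive $(1,1)$-classes). Once this signature input is granted, the rest is routine codimension bookkeeping in the period domain.
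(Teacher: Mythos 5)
Your proof is correct and follows essentially the same route as the paper: both identify $Y$ as a special subvariety of the form $\Phi^{-1}(\Gamma_H\backslash D_H)^0$ for the tautological subdatum of codimension $h^{2,0}$ in $D$, observe that the actual generic Hodge datum $(\G_Y, D_Y)$ of $Y$ may be strictly smaller so that $\codim_D D_Y = h^{2,0} + \codim_{D_H} D_Y \geq h^{2,0}$, and combine this with the bound $\codim_{U_d} Y \leq h^{2,0}$ and local Torelli to read off (1) and (2). You are merely more explicit about the $\mathbf{PGL}$-orbit dimension count establishing positive period dimension, and you correctly phrase the normalization in terms of the Mumford--Tate datum $(\G_Y, D_Y)$ — which is what Definition~\ref{atypical} actually requires — where the paper's proof slightly loosely writes ``monodromy datum.''
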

\begin{rmk}\label{remmm}
In particular a general component of $\NL_d$ with algebraic monodromy/generic
Mumford-Tate group strictly contained in the expected one is atypical.
Once more the union of such components will not be Zariski dense in $U_{d}$ in
virtue of \Cref{geometricZP}. Therefore we observe
that the (analytic) density of $\NL_d$ in $U_d$ comes from the union
of its components which are at the same time general and typical. We are not aware of the existence of
components which are at the same time general and atypical in this particular setting.
\end{rmk}
\begin{proof}
We use the notation from \Cref{sectionNL}. The argument given
there shows that $Y$ is special (of positive period dimension by~(\ref{period0})), of the form 
$\Phi^{-1}(\Gamma_H \backslash D_H)^0$. In particular one expects that 
\begin{displaymath}
\codim _{U_d} Y \leq \codim_D D_H=h^{2,0}
\end{displaymath}
and the equality to be the ``general case''. To understand
\Cref{atypical} we have to normalize the above intersection: indeed it
is possible that the monodromy datum associated to $Y$ is of the form $(\mathbf{H}_0,  D_{H_0})$, for
some $ D_{H_0}$ strictly contained in
$ D_{H}$. Let $(\mathbf{H}_Y, D_Y)$ be the monodromy
datum associated to $Y$ (or, more precisely, to $\VV$ restricted to a
normalization of the reduced scheme associated to $Y$). By
\Cref{atypical}, $Y$ is typical if and only if  
\begin{displaymath}
\codim _{U_d} Y = \codim _D D_Y
\end{displaymath}
(and atypical otherwise).

 By construction $(\mathbf{H}_Y, D_Y)$ is contained in $(\mathbf{H},D_H)$, and therefore
 \begin{displaymath}
 \codim _D D_Y= \codim_D D_H + \codim_{D_H} D_Y.
 \end{displaymath}
Hence $Y$ is typical if and only if
 \begin{displaymath}
 \codim _{U_d} Y =h^{2,0}+\codim_{D_H} D_Y.
 \end{displaymath}
 From the bound in \Cref{explicitNL}, we see that general agrees with
 typical if the monodromy is the expected one, and exceptional implies
 atypical. 
\end{proof}

\begin{proof}[Proof of \Cref{mainthm}]
\Cref{mainthm} follows immediately from
\Cref{mainprop} (1), and \Cref{geometricZP}. Indeed, as remarked in \Cref{section2}, the generic
monodromy group of $(U_d,\VV) $ is simple, and therefore in the case (b)
of the alternative in \Cref{geometricZP} the special subvariety $Z$
has to be strict. 
\end{proof}

\begin{proof}[Proof of \Cref{lastthm}]
Thanks to the previous result, it is enough to analyse the components of $\NL_d$ of codimension equal to $h^{2,0}$. The Picard generic components are typical in the sense of \Cref{section2}, and Picard exceptional exceptional are atypical. Therefore the claim follows, arguing as in the proof of of \Cref{mainthm}.
\end{proof}

To explain the full power of \Cref{main conj} (compared to its
geometric counterpart), we record here the following special
conjectural case, which really involves the zero-dimensional
components and is beyond our current understanding. 

\begin{conj}\label{conj0}
Suppose that $\mathcal{Y}\to \mathbb{P}^1$ is a Lefschetz pencil of
degree $d$ surfaces in $\mathbb{P}^3$. If $d\geq 5$ the Picard number
of $\mathcal{Y}_s$ is greater or equal to $2$ for at most a finite
number of values of $s\in \mathbb{P}^1 (\C)$.  
\end{conj}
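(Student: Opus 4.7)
The plan is to deduce Conjecture \ref{conj0} from the full Zilber--Pink conjecture (Conjecture \ref{main conj}) applied to the restriction of $\VV$ to the Lefschetz pencil, following exactly the pattern of reasoning used for \Cref{mainthm} but pushed down to zero-dimensional components.

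Let $S'\subset U_d$ be the smooth locus of the pencil $\mathcal{Y}\to \mathbb{P}^1$, viewed as a quasi-projective curve in $U_d$. First I would invoke the classical theorem of Lefschetz (refined by Deligne) that the algebraic monodromy of a Lefschetz pencil on primitive cohomology is generated by vanishing-cycle reflections and coincides with the full orthogonal group $\mathbf{G}^{\der}$. Combined with the André--Deligne theorem recalled in \Cref{section2}, this implies that the generic Mumford--Tate datum of $\VV|_{S'}$ equals the generic Hodge datum $(\mathbf{G},D)$ of $(U_d,\VV)$; in particular $\Phi_{|S'^{\an}}$ is non-constant and has $1$-dimensional image in $\Gamma\backslash D$.

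A point $s\in\mathbb{P}^1(\C)$ with $\rho(\mathcal{Y}_s)\geq 2$ is, by the Lefschetz $(1,1)$ theorem, exactly a point of $\HL(S',\VV|_{S'}^\otimes)$; and, since $\dim S'=1$, each irreducible component of this Hodge locus is a single point. I then claim that every such point is \emph{atypical} in the sense of \Cref{atypical}. Indeed, the Hodge subdatum $(\mathbf{H}_s,D_s)$ associated to any non-zero primitive Hodge class of type $(1,1)$ satisfies $\codim_D D_s\geq h^{2,0}$ (as computed in \Cref{sectionNL}), so the atypicality inequality reads
\[
\codim_{\Gamma\backslash D}\Phi(\{s\})<\codim_{\Gamma\backslash D}\Phi(S'^{\an})+\codim_{\Gamma\backslash D}\Gamma_{\mathbf{H}_s}\backslash D_s,
\]
i.e.\ $\dim(\Gamma\backslash D)<(\dim(\Gamma\backslash D)-1)+h^{2,0}$, equivalently $h^{2,0}\geq 2$. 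This holds for every $d\geq 5$ since $h^{2,0}=\binom{d-1}{3}\geq 4$. Applying \Cref{main conj} to the pair $(S',\VV|_{S'})$, the atypical Hodge locus (which here coincides with the full Hodge locus) is a finite union of atypical special subvarieties, all of which are zero-dimensional, yielding the desired finiteness.

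The main obstacle is precisely that \Cref{main conj} itself is open in the zero-dimensional regime: \Cref{geometricZP} handles only components of \emph{positive} period dimension, and its proof relies crucially on the ability to propagate typicality along positive-dimensional loci, a mechanism that breaks down at isolated points. An atypical intersection statement for isolated Hodge points on a curve in $U_d$ is of the same nature and difficulty as the André--Oort conjecture in its hyperbolic or mixed form, and any unconditional attack would have to produce effective height, Galois-orbit, or transcendence input on Noether--Lefschetz points lying on the pencil, well beyond the analytic density arguments used to establish \Cref{mainthm} and \Cref{lastthm}.
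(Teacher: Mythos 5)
Your argument is exactly the reduction the paper intends: Conjecture~\ref{conj0} is not proved unconditionally (it is presented as a \emph{conjecture}), and the paper only observes, by reference to the proof of Theorem~\ref{mainthm}, that it is a special case of the Zilber--Pink Conjecture~\ref{main conj}; you flesh out that sketch by carrying out the atypicality computation on the restricted period map $\Phi|_{S'}$ and correctly pinpoint the genuine obstruction, namely that Theorem~\ref{geometricZP} says nothing about zero-dimensional atypical special subvarieties. One minor imprecision: a point $s$ with $\rho(\mathcal{Y}_s)\geq 2$ is \emph{contained in} $\HL(S',\VV|_{S'}^{\otimes})$ rather than coinciding with it, and the full Hodge locus of the pencil could a priori contain tensor special points whose subdatum has codimension $1$ in $D$ (hence typical), so the parenthetical ``which here coincides with the full Hodge locus'' is unjustified --- but this does not affect the argument, since finiteness of the atypical locus already controls the Noether--Lefschetz points.
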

The proof of \Cref{mainthm} and the previous arguments show that the
above is a special case of \Cref{main conj}. In fact, this special
case of the Zilber-Pink conjecture was noticed a long time ago by de
Jong \cite[Sec. 3.2.2.]{dj} and should be attributed to him (see also
\cite[Sec. 3.6.]{2021arXiv210708838B} for a history of the Zilber-Pink
conjecture, and a comparison with de Jong's unpublished viewpoint). 

\section{Zilber-Pink and the Noether-Lefschetz locus for arbitrary threefolds}\label{sec4}
To elucidate the role of the Zilber-Pink viewpoint, we conclude with a
general result on the distribution of the Noether-Lefschetz locus for
arbitrary $(Y,L)$ where $Y$ is a smooth projective threefold and $L$ a
very ample line bundle on $Y$. Similar considerations apply also to
the case of quasi-smooth surfaces in a weighted projective space
$\mathbb{P}(q_0,q_1,q_2,q_3)$ (see \cite{zbMATH03799831,
  zbMATH04123897, zbMATH04137950}) as well as surfaces that arise as
complete intersections (see for example \cite{zbMATH06050954}). We
consider $U_{Y,L} \subset \mathbb{P}H^0(L)$, the parameter space of
smooth surfaces $X\subset Y$ such that $\mathcal{O}_Y(X)=L$
(i.e. surfaces in the same equivalence class as $L$), and define 
\begin{displaymath}
\NL_L:=\{[X] \in U_{Y,L} : \operatorname{Pic}(Y)\to \operatorname{Pic}(X) \text{  is not a surjection} \}.
\end{displaymath}
(See \cite{zbMATH05213812, zbMATH06342071} for details about why the connected components of the Picard groups can be ignored).

 Moishezon \cite{zbMATH03260473} found the exact conditions for a
 \emph{Noether-Lefschetz theorem} to hold for $(Y,L)$, namely the
 generic member of $U_{Y,L}$ must have a non-zero evanescent
 $(2,0)$-cohomology class. Our exposition is inspired by the setting
 considered by Lopez and Maclean \cite{zbMATH05213812} and their
 open-ended question (see the last paragraph of page 2 in \emph{op. cit.}):
 
 \begin{question}[Lopez-Maclean]
 How large can the components of $\NL_L$ be in comparison with $U_{Y,L}$?
 \end{question}
 
We see $\NL_L$, in a very Hodge theoretic fashion, as the zero locus
of a vector bundle section. Let $\VV\to U_{Y,L}$ be the polarized
$\Z$-VHS interpolating the $H^2(X,\Z)_{\ev}$, and assume from on that
$h^{2,0}_{\ev}\neq 0$. See \cite[Sec. 2]{zbMATH05213812} for a brief
discussion of vanishing cohomology (we very much follow their
notation and viewpoint, but we changed the terminology from \emph{evanescent} cohomology to vanishing), \cite{zbMATH01927232} for a discussion about
primitive and vanishing cohomologies. 
 
 We simply present the general axiomatic statement, but several
 concrete corollaries can be obtained. As the reader might notice, the
 body of literature on these questions is quite large. We don't attempt to survey the most general statements,
 but limit ourselves to concrete applications of the Zilber-Pink
 paradigm, that we hope will complement several nice ideas and
 viewpoints that emerged from the study of the Noether-Lefschetz locus
 of smooth surfaces. In the hope that the result below will have
 further algebro-geometric applications, we refer also to
 \cite{zbMATH00850005, zbMATH05503406} where various cycle theoretic
 applications related to the knowledge of $\NL_L$ are discussed (among
 which one also finds Koll\'{a}r's counterexamples to the integral
 Hodge conjecture). 
 \begin{thm}\label{thmfinal}
If the following are satisfied:
\begin{itemize}
\item[A1.] The algebraic monodromy group of $\VV\to U_{Y, L}$ contains the full special orthogonal
  stabilizing the intersection form; 
\item[A2.] The infinitesimal Torelli theorem for $(U_{Y,L},\VV)$ holds true;
\item[A3.] $\dim U_{Y,L}>h^{2,0}_{\ev}$ ($>0$, as
  assumed from the beginning); 
\end{itemize} 
then the locus $\NL_L$ consists of countably many irreducible (strict)
algebraic subsets of $U_{Y,L}$, behaving as follows: 
\begin{enumerate}
\item The union of the atypical components of $\NL_L$ (in
  particular the ones of codimension $< h^{2,0}_{\ev}$) is not Zariski
  dense: its Zariski closure is contained in a finite union of strict
  special subvarieties; 
\item The union of the typical components is dense in $U_{Y,L}(\C)$
  (in particular it is non-empty and they have codimension
  $h^{2,0}_{\ev}$). 
\end{enumerate}
 \end{thm}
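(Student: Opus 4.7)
The plan is to mirror the proof of \Cref{mainthm} in this axiomatic setting, using \Cref{geometricZP} for part (1) and the unconditional density theorem for the typical Hodge locus from \cite{2021arXiv210708838B} for part (2); the auxiliary \Cref{mainprop} will be replaced by a direct analog whose proof is dictated by A1--A3.

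First I would set up the period map $\Phi\colon U_{Y,L}^{\an}\to \Gamma\backslash D$ for $\VV$. Assumption A2 (infinitesimal Torelli) makes $\Phi$ have discrete fibers, so every positive-dimensional component of the Hodge locus has positive period dimension. Using A1, the generic adjoint Mumford-Tate group of $\VV$ is $\SO(2h^{2,0}_{\ev},h^{1,1}_{\ev})$ over $\R$, and the Hodge-Lie algebra computation of \S\ref{sectionNL} transfers verbatim: each component $Y$ of $\NL_L$ is special of positive period dimension, of the form $\Phi^{-1}(\Gamma_H\backslash D_H)^0$, with $\codim_{U_{Y,L}}Y\leq h^{2,0}_{\ev}$; the countability of the components of $\NL_L$ follows from CDK algebraicity. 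The analog of \Cref{mainprop} then reads: writing $(\mathbf{H}_Y,D_Y)\subseteq(\mathbf{H},D_H)$ for the monodromy datum of $Y$, the component $Y$ is typical if and only if $\codim_{U_{Y,L}}Y=h^{2,0}_{\ev}+\codim_{D_H}D_Y$. In particular every component of codimension strictly less than $h^{2,0}_{\ev}$ is automatically atypical.

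For (1), I would apply \Cref{geometricZP} to each irreducible component $Z$ of the Zariski closure of the union of atypical components of $\NL_L$. Assumption A1 forces the generic adjoint Mumford-Tate group of $(U_{Y,L},\VV)$ to be $\R$-simple, so in alternative (b) of \Cref{geometricZP} the non-trivial decomposition $\G_Z^{\ad}=\mathbf{H}_Z^{\ad}\times\mathbf{L}_Z$ compels $Z$ to be strictly contained in $U_{Y,L}$; in alternative (a), $Z$ is itself a strict special subvariety. The finiteness of maximal atypical special subvarieties provided by \Cref{geometricZP} then yields (1).

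For (2), I would invoke the unconditional density theorem for the typical Hodge locus from \cite[Thm.~10.1]{2021arXiv210708838B}: once $\HL(U_{Y,L},\VV^\otimes)_\typ$ is non-empty, it is analytically dense in $U_{Y,L}$. The main obstacle, I expect, is verifying this non-emptiness in the present axiomatic generality. Assumption A3 provides the necessary numerical room ($\dim U_{Y,L}>h^{2,0}_{\ev}=\codim_D D_H$, so a typical transverse intersection is a priori possible), but one still has to exhibit at least one typical Noether-Lefschetz point, either by a deformation-theoretic argument from A1 and A2 or by an explicit construction in $(Y,L)$ analogous to the family of surfaces containing a line in $U_d$. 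Once such a point is produced, density and the codimension identity $h^{2,0}_{\ev}$ follow at once from the propagation mechanism of \cite{2021arXiv210708838B}.
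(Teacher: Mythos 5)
You have correctly identified the decisive obstacle in part (2), namely the non-emptiness of the typical Hodge locus, but your proposal leaves it as an acknowledged gap rather than closing it. You suggest that one must ``exhibit at least one typical Noether-Lefschetz point,'' possibly by a deformation-theoretic argument or an explicit construction, but you do not carry either out, and neither is how the paper proceeds. The paper instead invokes \cite[Thm.~1.6]{2023arXiv230316179K} (see also \cite{2022arXiv221110592E}): these are the abstract existence results for typical intersections, inspired by the Zilber-Pink paradigm of \cite{2021arXiv210708838B}, which prove precisely that the admissibility condition A3 ($\dim U_{Y,L} > h^{2,0}_{\ev}$) is \emph{equivalent} to the existence (and hence, via the density mechanism, to part (2)) of typical components of the expected codimension. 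Without citing or reproving such a statement, your argument for (2) remains incomplete, even though your invocation of \cite[Thm.~10.1]{2021arXiv210708838B} for the propagation step once a typical component exists is correct.

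Your treatment of part (1) is essentially the paper's argument. One point worth stating more explicitly: to apply \Cref{geometricZP} you need the atypical components to have positive \emph{period} dimension, and A2 alone only converts positive dimension into positive period dimension. It is A3 that guarantees positive dimension in the first place, since the Hodge-Lie algebra computation of \Cref{sectionNL} gives $\codim_{U_{Y,L}}Y \leq h^{2,0}_{\ev} < \dim U_{Y,L}$ for every component $Y$ of $\NL_L$. You gesture at this via the discrete-fibers remark, but the chain A3 $\Rightarrow$ positive dimension $\Rightarrow$ (by A2) positive period dimension is the precise role the paper assigns to A3 in part (1). With that clarified, your use of the $\R$-simplicity of $\G^{\ad}$ from A1 to rule out a non-strict $Z$ in alternative (b) of \Cref{geometricZP}, and the resulting finiteness of the maximal atypical special subvarieties, matches the paper.
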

(If one doesn't wish to assume A2. it is enough to replace
``dimension'' by ``period dimension'' and the result is still true.) 
Regarding special cases the existence of components of $\NL_{Y}$ of
maximal codimension, we refer also to \cite{zbMATH07500494} and
\cite{zbMATH06908314}.  
\begin{rmk}
 Several concrete cases can be obtained by referring to the opportune
 results from the literature. For example, regarding A1. see (the
 proof of) \cite[Thm. 9.1]{zbMATH01425212}, for
 A2. \cite{zbMATH03944005} (as well as \cite{zbMATH03985397}, for the
 weighted projective case). Such results often require to replace $L$
 by $L^d$, for $d$ big enough. We didn't attempt to give the optimal
 results. 
\end{rmk}

\begin{proof}[Proof of \Cref{thmfinal}]
We study $(\VV, U_{Y, L})$ via the period map $\Psi : U_{Y, L}\to
\Gamma \backslash D$ (which we know to be non-trivial, thanks to A1
and A2). Condition A3. asserts that we would expect (non-empty)
components of $\NL_{L}$ of codimension $h^{2,0}_{\ev}$. This is exactly the
``admissible'' condition needed to apply
\cite[Thm. 1.6]{2023arXiv230316179K} (see also the main result of \cite{2022arXiv221110592E}), which therefore
establishes the existence of components of the desired
codimension. In \emph{op. cit.}, the authors actually prove that 
A3. is equivalent to (2).

After this, the (analytic) density of the typical
components and the non-density of atypical ones follow from the
results of \Cref{section2} (that hold true for arbitrary bases $S$,
and VHS $\VV$). Condition A3. is used in (1), only to ensure that 
the atypical components of $\NL_L$ are necessarly of positive dimension.
\end{proof}

\begin{rmk}
The same proof also shows that, under the weaker condition $\dim U_{Y,L}\geq h^{2,0}_{\ev}$, one still obtains that
the components of $\NL_L$ of codimension $< h^{2,0}_{\ev}$ are not Zariski dense in $U_{Y,L}(\C)$, and the ones of codimension
$h^{2,0}_{\ev}$ are analytically dense.
\end{rmk}
\bibliographystyle{abbrv}
\bibliography{biblio.bib}

\Addresses
\end{document}